\newcommand{\IPC}{{\sf IPC}}
\newcommand{\K}{{\sf K}}
\newcommand{\KD}{{\sf KD}}
\newcommand{\GL}{{\sf GL}}
\newcommand{\SL}{{\sf SL}}
\newcommand{\LJ}{{\sf G3ip}}
\newcommand{\LJX}{{\sf G3iX}}
\newcommand{\DY}{{\sf G4ip}}
\newcommand{\DYX}{{\sf G4iX}}
\newcommand{\G}{{\sf G}}
\newcommand{\rsch}{{\EuScript R}}
\newcommand{\lgc}{{\sf L}}
\newcommand{\lang}{\ensuremath {{\EuScript L}}}
\newcommand{\defn}{\equiv _{\mbox{\em \tiny df}}} 
\newcommand{\af}{\vdash}
\newcommand{\imp}{\rightarrow}
\newcommand{\en}{\wedge} 
\newcommand{\of}{\vee}
\newcommand{\mdl}{\raisebox{0mm}{$\ocircle$}} %ocircle 
\newcommand{\bx}{\raisebox{.1mm}{$\Box$}}
\newcommand{\bof}{\bigvee}
\newcommand{\ben}{\bigwedge}
\newcommand{\seq}{\Rightarrow}
\newcommand{\sml}{\ll}
\newcommand{\MR}{{\cal R}}
\newcommand{\De}{\Delta}
\newcommand{\Ga}{\Gamma}
\newcommand{\Sig}{\Sigma}
\newcommand{\gam}{\gamma}
\renewcommand{\phi}{\varphi}
\newcommand{\cald}{{\EuScript D}}
\newtheorem{Theor}{Theorem}
\newenvironment{theorem}{\begin{Theor}\em }{\end{Theor}}
\newtheorem{Lemma}{Lemma}
\newenvironment{lemma}{\begin{Lemma}\em }{\end{Lemma}}
\newtheorem{Coro}{Corollary}
\newenvironment{corollary}{\begin{Coro}\em }{\end{Coro}}
\newtheorem{Remark}{Remark}
\newenvironment{remark}{\begin{Remark}\em }{\end{Remark}}
\newtheorem{Claim}{Claim} 
\newtheorem{defin}{Definition}
\newtheorem{exam}{Example}
\newenvironment{example}{\begin{exam}\em }{\end{exam}}
\newenvironment{proof}{{\bf Proof}}{\hfill $\slot$}
\newcommand{\slot}{\hfill \mbox{$\dashv$}}
\numberwithin{figure}{section}
\begin{document}
\title{The G4i analogue of a G3i calculus} 
\vskip5pt 
\author{
Rosalie Iemhoff
\footnote{Utrecht University, the Netherlands, r.iemhoff@uu.nl. Support by the Netherlands Organisation for Scientific Research under grant 639.073.807 is gratefully acknowledged.} }
    
\maketitle

\begin{abstract}
\noindent 
This paper provides a method to obtain terminating analytic calculi for a large class of intuitionistic modal logics. For a given logic \lgc\ with a cut-free calculus \G\ that is an extension of \LJ\ the method produces a terminating analytic calculus that is an extension of \DY\ and equivalent to \G. \DY\ has been introduced by Dyckhoff in 1992 as a terminating analogue of the calculus \LJ\ for intuitionistic propositional logic. Thus this paper can be viewed as an extension of Dyckhoff's work to intuitionistic modal logic. 
\end{abstract}

{\small {\em Keywords}: intuitionistic modal logic, intermediate logic, sequent calculus, terminating proof systems

{\footnotesize MSC: 03B05, 03B45, 03F03 } 

\section{Introduction}
One of the standard calculi without structural rules for $\IPC$ is $\LJ$ (Figure~\ref{figgthm}), which is the propositional part of the calculus {\sf G3i} from \citep{troelstra&schwichtenberg96}. This is an elegant analytic calculus, but it has the unfortunate feature that unrestricted proof search is not terminating in it. The reason for this lies in its left implication rule, in which the principal formula occurs in one of the premises. In \citep{dyckhoff92} a calculus \DY\footnote{Originally, the calculus was called {\sf LJT} by Dyckhoff, but in this paper we use the name $\DY$ from \citep{troelstra&schwichtenberg96}, which seems to be more common nowadays.} (Figure~\ref{figdy}) was introduced that is the result of replacing the single left implication rule in $\LJ$ by four left implication rules, each corresponding to the outermost logical symbol of the antecedent of the principal implication. This system was shown to be terminating and equivalent to $\LJ$, where a calculus is terminating if there exists an order on sequents under which in all rules the premises come before the conclusion in that order. 

The modest aim in this short note is to extend Dyckhoff's result to intuitionistic modal logics. For any extension $\LJX$ of \LJ\ by modal rules $\MR$, a calculus, called  \DYX, is defined that is an extension of \DY\ by $\MR$ and several additional rules determined by $\MR$. It is shown that under some mild conditions $\LJX$ and $\DYX$ are equivalent. Moreover, if the rules in $\MR$ are terminating in an order that is an extension (to modal logic) of Dyckhoff's original order on sequents, then $\DYX$ terminates in that order. Thus for logics that have such a calculus \LJX\ that is an extension of \LJ\ by terminating rules, establishing the equivalence of \LJX\ and \DYX\ indeed is a method to obtain a terminating calculi for such logics.

The interest in terminating calculi lies in the fact that they can be a useful in establishing certain properties of a logic, such as decidability, or uniform interpolation, where the syntactic approach developed in \citep{iemhoff17,jalali&tabatabai2018} defines interpolants on the basis of the rules of a calculus, for which it is essential that the calculus is terminating. This paper's method to obtain a terminating calculus for a given logic uses a (usually nonterminating) calculus based on \LJ. Since many intuitionistic modal logics have a calculus based on \LJ, we therefore hope that this paper will be a convenient tool in settings where terminating calculi are required. 

This paper grew out of the research carried out in \citep{giessen&iemhoff2019,giessen&iemhoff2020,iemhoff17}, in which \DY\ based calculi are developed for intuitionistic versions of the logics \K, \KD, G\"odel-L\"ob Logic \GL\ and Strong L\"ob Logic \SL. The results for the first two logics follow from the results in this note, while the results for the other two logics require a slight adjustment of the method presented in this note, as explained in Section~\ref{secconclusion}.   

The paper is built-up as follows. Section~\ref{seclogics} introduces \LJX\ and \DYX, the order $\sml$ on sequents based on Dyckhoff's weight function, and the further notions needed in the paper. Section~\ref{secequivalence} contains the main theorem, the equivalence of \LJX\ and \DYX. Section~\ref{secconclusion} summarizes the results and discusses possible extensions.

\section{Preliminaries}
 \label{seclogics}
We consider (modal) propositional logics in a language $\lang$ that contains a {\em constant} $\bot$, {\em propositional variables} or {\em atoms} $p,q,r,\dots$, {\em modal operators} $\mdl_1,\mdl_2,\dots$ and the {\em connectives} $\neg,\en,\of,\imp$, where $\neg\phi$ is defined as $(\phi \imp\bot)$. $\bot$ is by definition not an atom. $\mdl$ ranges over the modal operators.  

We denote finite multisets of formulas by $\Ga,\Pi,\De,\Sig$. 
We denote by $\Ga \cup \Pi$ the multiset that contains only formulas $\phi$ that belong to $\Ga$ or $\Pi$ and the number of occurrences of $\phi$ in $\Ga \cup \Pi$ is the sum of the occurrences of $\phi$ in $\Ga$ and in $\Pi$. Furthermore ($a$ for antecedent, $s$ for succedent):  
\[
 (\Ga \seq \De)^a \defn \Ga \ \ \ \ (\Ga \seq \De)^s \defn \De \ \ \ \ 
 \mdl\Ga \defn \{ \mdl\phi \mid \phi \in \Ga \}.
\]
We only consider single-conclusion sequents, which are expressions $(\Ga \seq \De)$, where $\De$ contains at most one formula, and which are interpreted as $I(\Ga \seq \De) = (\ben\Ga \imp \bof\De)$.  In a sequent, $\Ga,\Pi$ is short for $\Ga\cup \Pi$.  
When sequents are used in the setting of formulas, we often write $S$ for $I(S)$, such as in $\af \bof_i S_i$, which thus denotes $\af \bof_i I(S_i)$. 

The {\em degree} of a formula $\phi$ is inductively defined by $d(\bot)=0$, $d(p)=1$, $d(\mdl_i \phi)=d(\phi)+1$, and $d(\phi\circ \psi)= d(\phi)+d(\psi)+1$ for $\circ \in\{\en,\of,\imp\}$.
In the setting of $\DYX$ systems we need an order on sequents based on a {\em weight function}, which is a function $w(\cdot)$ that assigns positive numbers to formulas in such a way that all atoms and $\bot$ have weight 1 and all other formulas have a weight above 1. 
With a weight function $w$ we associate the following order on sequents: 
$S_0 \sml_w S_1$ if and only if 
$S_0^a\cup S_0^s \sml_w S_1^a\cup S_1^s$, where $\sml_w$ is the order on multisets determined by $w$ as in \citep{dershowitz&manna79} (where they in fact define $\gg$): for multisets $\Ga,\De$ we have $\De \sml_w \Ga$ if $\De$ is the result of replacing one or more formulas in $\Ga$ by zero or more formulas of lower weight. 

\subsection{Calculi G3iX and G4iX}
 \label{secclassical}
 
\begin{figure}
 \centering
\[\small 
 \begin{array}{ll}
 \deduce{\Ga,p \seq p}{} \ \ \ \text{{\it Ax} \ \ ($p$ an atom)}  & 
  \deduce{\Ga,\bot\seq \De}{} \ \ \ L\bot \\
 \\ 
 \infer[R\en]{\Ga \seq \phi \en \psi}{\Ga\seq \phi & \Ga \seq \psi} & 
  \infer[L\en]{\Ga, \phi\en \psi \seq \De}{\Ga, \phi, \psi \seq \De} \\
 \\
 \infer[R\!\of \ (i=0,1)]{\Ga \seq \phi_0 \of \phi_1}{\Ga \seq \phi_i} & 
  \infer[L\of]{\Ga,\phi\of \psi\seq \De}{\Ga, \phi \seq \De & \Ga,\psi \seq \De} \\
 \\
 \infer[R\!\imp]{\Ga \seq \phi \imp \psi}{\Ga,\phi \seq \psi} & 
 \infer[L\!\imp]{\Ga,\phi\imp\psi\seq \De}{\Ga,\phi\imp\psi\seq \phi & \Ga,\psi\seq \De}\\
 \end{array}
\] 
\caption{The Gentzen calculus $\LJ$}
 \label{figgthm}
\end{figure}

\begin{figure}
 \centering
\[\small 
 \begin{array}{lll}
 \deduce{\Ga,p \seq p}{} \ \ \ \text{{\it Ax} \ \ ($p$ an atom)}  & 
  \deduce{\Ga,\bot\seq \De}{} \ \ \ L\bot \\
 \\ 
 \infer[R\en]{\Ga \seq \phi \en \psi}{\Ga\seq \phi & \Ga \seq \psi} & 
  \infer[L\en]{\Ga, \phi\en \psi \seq \De}{\Ga, \phi, \psi \seq \De} \\
 \\
 \infer[R\!\of \ (i=0,1)]{\Ga \seq \phi_0 \of \phi_1}{\Ga \seq \phi_i} & 
  \infer[L\of]{\Ga,\phi\of \psi\seq \De}{\Ga, \phi \seq \De & \Ga,\psi \seq \De} \\
  \\
 \infer[R\!\imp]{\Ga \seq \phi \imp \psi}{\Ga,\phi \seq \psi} & 
 \infer[Lp\!\imp\text{ ($p$ an atom)}]{\Ga, p,p \imp \phi \seq \De}{\Ga,p,\phi \seq \De}\\
 \\
 \infer[L\en\!\imp]{\Ga, \phi\en\psi \imp \gamma \seq \De}{\Ga,\phi\imp (\psi\imp\gamma)\seq\De} & 
 \infer[L\of\!\imp]{\Ga,\phi \of \psi \imp \gamma \seq \De}{
  \Ga,\phi \imp \gamma, \psi \imp \gamma \seq \De}\\ 
 \\ 
 \infer[L\!\imp\!\imp]{\Ga, (\phi\imp \psi) \imp \gamma \seq \De}{
  \Ga, \psi\imp \gamma \seq \phi \imp \psi & \gamma,\Ga \seq \De}\\
\end{array}
\] 
\caption{The Gentzen calculus $\DY$}
 \label{figdy} 
\end{figure}

In this paper, a {\em right modal rule} is a rule of the following form, where $\mdl$ is one of the modal operators:
\[
 \infer[\rsch]{\Ga \seq \mdl\phi}{S_1 & \dots & S_n}
\]
With such a rule we associate the following implication rule
\[
  \infer[\rsch^\imp]{\Ga,\mdl\phi\imp \psi \seq \De}{
  S_1 & \dots & S_n & \Ga,\psi \seq \De}
\]
If $\LJX$ ({\sf X} a finite string) is the name of a calculus that consists of $\LJ$ plus a set of modal rules $\MR$, then $\DYX$ is the calculus $\DY$ extended by the rules in $\MR$ {\em plus} the rules $\rsch^\imp$ for those $\rsch\in \MR$ that are right modal rules. 

\begin{example}
Let $\LJX$ denote \LJ\ plus the rule 
\[
 \infer[\rsch_{\it X}]{\Pi,\mdl\Ga \seq \mdl\phi}{\mdl\Ga \seq \phi}
\]
Then $\DYX$ consists of \DY, $\rsch_{\it X}$, and the rule 
\[
 \infer[\rsch_{\it X}^\imp]{\Pi,\mdl\Ga,\mdl\phi \imp \psi \seq \De}{
  \mdl\Ga \seq \phi & \Pi,\mdl\Ga,\psi \seq \De}
\]
\end{example}

A rule is {\em nonflat} if its conclusion contains at least one connective or modal operator, and it is not an axiom, i.e\ it has nonempty premises. A set of rules or a calculus is nonflat if all of its rules are. Note that a calculus $\LJX$ or $\DYX$ is nonflat if all rules in $\MR$ are nonflat. 

Given a sequent calculus \G\ and a sequent $S$, $\af_\G S$ denotes that $S$ is derivable in \G.

\subsection{Terminating calculi}
Given an order $\sml$ on sequents, a rule is {\em terminating in $\sml$} if its premises come before the conclusion in order $\sml$.
A calculus is {\em terminating} if there exists a weight function $w$ such that all rules of \G\ terminate in the order $\sml_w$ on sequents (see the beginning of Section~\ref{seclogics} for the definition of $\sml_w$). 

\begin{example}
 \label{exweight}
A natural weight function based on the weight function from \citep{dyckhoff92} is inductively defined as follows: the weight of an atom and the constant $\bot$ is 1, $w_D(\phi \circ \psi) = w_D(\phi)+w_D(\psi)+i$, where $i=1$ in case $\circ \in \{\of,\imp\}$ and $i=2$ otherwise, and $w_D(\mdl_i \phi)=w_D(\phi)+1$. Let $\sml_D$ denote $\sml_{w_D}$ and call it the {\em Dyckhoff order}. It is not hard to see that all rules of \DY\ terminate in $\sml_D$. The following are examples of well-known modal rules that terminate in $\sml_D$ (writing $\bx$ for $\mdl$). 
\[
 \infer[\rsch_{K}]{\Pi,\bx\Ga \seq\bx\phi}{\Ga \seq \phi} \ \ \ \ 
 \infer[\rsch_{D}]{\Pi,\bx\Ga,\bx\phi \seq\De}{\Ga, \phi \seq \ } \ \ \ \  
 \infer[\rsch_{T}]{\Ga,\bx\phi \seq\De}{\Ga, \phi \seq \De }
\]
\end{example}

\begin{remark}
 \label{remweight}
If the rules in $\MR$ are terminating in the Dyckhoff order, then \DYX\ is terminating in the Dyckhoff order.
\end{remark}

\subsection{Structural rules}
A calculus $\G$ is {\em closed under weakening} if the following two rules are admissible: 
\[
 \infer{\Ga,\phi \seq \De}{\Ga \seq \De} \ \ \ \ \infer{\Ga \seq \phi}{\Ga \seq \ }
\]
It is {\em closed under contraction} if the following rule is admissible: 
\[
 \infer{\Ga,\phi \seq \De}{\Ga,\phi,\phi \seq \De}
\]
$\G$ has {\em cut-elimination} if the Cut rule is admissible:
\[
 \infer[\it Cut]{\Ga_1,\Ga_2 \seq \De}{\Ga_1\seq \phi & \Ga_2,\phi \seq \De}
\]
$\G$ is {\em closed under the structural rules} if it is closed under weakening and contraction and has cut-elimination. 
$\G$ is closed under {\em Implication Inversion} if the following rule is admissible:
\[
 \infer{\Ga, \psi \seq \De}{\Ga,\phi\imp\psi \seq \De}
\]
A rule is {\em invertible} if the derivability of the conclusion implies the derivability of the premises. 

Because of the power of the Cut rule, the following lemma is straightforward.

\begin{lemma}
 \label{leminvertible}
If $\LJX$ is closed under the structural rules, then the rules $R\en$, $L\en$, $L\of$, $R\!\imp$, and $Lp\!\imp$ are invertible and $\LJX$ is closed under Implication Inversion. 
\end{lemma}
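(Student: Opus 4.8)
The plan is to derive all six statements uniformly from the admissibility of Cut (together with weakening and contraction), exactly as the sentence preceding the lemma suggests. The one auxiliary ingredient is the generalized axiom $\af_{\LJX}\phi\seq\phi$ for every formula $\phi$: this is standard for the propositional connectives (induction on $\phi$ using the $\LJ$-rules), and its modal instances, when they arise, must be supplied by the rules in $\MR$, so I treat the generalized axiom as a prerequisite. From it, weakening together with the logical rules of $\LJ$ produces all the small auxiliary sequents that I will cut against.

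The invertibilities of the left rules all follow one pattern. Suppose the conclusion $\Ga,\theta\seq\De$ of such a rule is derivable, where $\theta$ is the principal formula and the premise is obtained by replacing $\theta$ with a multiset $\Sig$ of its immediate subformulas (so $\Sig=\phi,\psi$ for $L\en$ with $\theta=\phi\en\psi$, and $\Sig=\phi$ with $\theta=p\imp\phi$ and the extra copy of $p$ kept in the context for $Lp\!\imp$). It suffices to derive the auxiliary sequent $\Sig\seq\theta$: weakening it to $\Ga,\Sig\seq\theta$ and cutting with $\Ga,\theta\seq\De$ on $\theta$ gives $\Ga,\Ga,\Sig\seq\De$, and contraction yields the premise $\Ga,\Sig\seq\De$. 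For $L\en$ the auxiliary sequent $\phi,\psi\seq\phi\en\psi$ is derived by $R\en$ from $\phi,\psi\seq\phi$ and $\phi,\psi\seq\psi$ (weakenings of the generalized axiom); for $Lp\!\imp$ the sequent $\phi\seq p\imp\phi$ comes from $R\!\imp$ applied to $\phi,p\seq\phi$. The rule $L\of$ has two premises, but each is treated identically: for $\Ga,\phi\seq\De$ cut against the weakening of $\phi\seq\phi\of\psi$ (obtained by $R\!\of$), and symmetrically for $\Ga,\psi\seq\De$.

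The two right-hand statements are the mirror image. For $R\en$, if $\Ga\seq\phi\en\psi$ is derivable then cutting it on the left against $\phi\en\psi\seq\phi$ (from $L\en$ and the generalized axiom) gives $\Ga\seq\phi$, and likewise $\Ga\seq\psi$. For $R\!\imp$, from a derivation of $\Ga\seq\phi\imp\psi$, cut against the auxiliary $\phi,\phi\imp\psi\seq\psi$ --- itself derived by $L\!\imp$ from $\phi,\phi\imp\psi\seq\phi$ and $\phi,\psi\seq\psi$ --- to obtain $\Ga,\phi\seq\psi$ directly, where the two contexts are disjoint so no contraction is needed. Finally, Implication Inversion is handled in the left-rule style: cut the weakening $\Ga,\psi\seq\phi\imp\psi$ of $\psi\seq\phi\imp\psi$ (from $R\!\imp$) against $\Ga,\phi\imp\psi\seq\De$ and contract, yielding $\Ga,\psi\seq\De$.

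None of the individual cuts is hard; the only place where real content hides is the generalized axiom $\phi\seq\phi$, and specifically its modal instances, since these form the sole step that appeals to the rules $\MR$ rather than to pure bookkeeping with Cut, weakening and contraction. Everything else is routine multiset arithmetic, which is exactly why the lemma is \emph{straightforward}.
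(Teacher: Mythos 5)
Your reconstruction is exactly the argument the paper has in mind: its entire proof of this lemma is the one sentence ``Because of the power of the Cut rule, the following lemma is straightforward,'' and your scheme of cutting the given sequent against a small auxiliary sequent, then restoring the context by weakening and contraction, is the standard way to cash that in. All six cut set-ups are correct (including the observation that $R\!\imp$ needs no contraction because the two cut contexts are disjoint).

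The one substantive point is the status of the generalized axiom $\af_{\LJX}\phi\seq\phi$, which you rightly isolate as the only real ingredient but then promote to a ``prerequisite.'' Note that it is \emph{not} implied by the lemma's hypotheses: take $\MR=\emptyset$, so that $\LJX$ is just $\LJ$ over the modal language. This calculus is closed under weakening, contraction and Cut, yet $\mdl\chi\seq\mdl\chi$ is underivable (no rule introduces $\mdl$ on the right), so your auxiliary sequents such as $\mdl\chi,\psi\seq\mdl\chi\en\psi$ are unavailable --- while the lemma itself remains true for this calculus, by the usual height-preserving inversion argument with boxed formulas treated as inert. So, as written, your proof establishes the lemma only under an additional hypothesis beyond closure under the structural rules. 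For the calculi the paper actually works with the gap is harmless: a right modal rule such as $\rsch_{K}$ yields $\mdl\phi\seq\mdl\phi$ from $\phi\seq\phi$, and generalized identity then follows by the induction on $\phi$ you describe; arguably the paper tacitly assumes this. But in full generality one must either add derivability of $\phi\seq\phi$ as an explicit hypothesis or replace the cut argument by induction on derivations --- which in turn needs information about the shape of the rules in $\MR$ (e.g.\ that compound antecedent formulas occur only in freely instantiable context positions) that the paper does not supply. Making this caveat explicit, rather than folding it into a prerequisite, is the only correction your write-up needs.
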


\section{Equivalence of G3iX and G4iX}
 \label{secequivalence}
In this section, let $\LJX$ be a calculus that consists of $\LJ$ plus a set of modal rules $\MR$. Recall that $\DYX$ is the calculus $\DY$ extended by the rules in $\MR$ plus the rules $\rsch^\imp$ for those $\rsch\in \MR$ that are right modal rules. We show that $\DYX$ and $\LJX$ are equivalent, for which we first have to prove a normal form theorem (Lemma~\ref{lemstrict}) for derivations in \LJX.

\subsection{Strict proofs in G3iX}
A multiset is {\em irreducible} if it has no element that is a disjunction or a conjunction or falsum and for no atom $p$ does it contain both $p\imp\psi$ and $p$. A sequent $S$ is {\em irreducible} if $S^a$ is. A proof is {\em sensible} if its  last inference does not have a principal formula on the left of the form $p\imp\psi$ for some atom $p$ and formula $\psi$.\footnote{In \cite{iemhoff18} the requirement that the principal formula be on the left was erroneously omitted.} A proof in $\LJX$ is {\em strict} if in the last inference, in case it is an instance of $L\!\imp$ with principal formula $\mdl\phi \imp \psi$, the left premise is an axiom or the conclusion of an application of a right modal rule. 

\begin{remark}
 \label{remirrseq}
If $\MR$ is nonflat, then in any strict proof ending with an instance of $L\!\imp$ with principal formula $\mdl\phi \imp \psi$ and conclusion $S$, because the formula in the succedent of the left premise is $\mdl\phi$, in case the left premise is an instance of an axiom it can only be an instance of $L\bot$. This implies that if $S$ is irreducible, the left premise cannot be an instance of an axiom and thus is required to be the conclusion of an application of a right modal rule.
\end{remark}

\begin{lemma} 
 \label{lemstrict} 
If $\LJX$ is closed under Implication Inversion, then 
every irreducible sequent that is provable in $\LJX$ has a sensible strict proof in $\LJX$. 
\end{lemma}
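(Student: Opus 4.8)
The plan is to proceed by induction on the height of a given proof $\pi$ of the irreducible sequent $S=(\Ga\seq\De)$, reshaping $\pi$ so that its final inference becomes sensible and strict. I would first isolate an auxiliary fact that does most of the modal work: \emph{every provable irreducible sequent with a modal succedent $\mdl\phi$ has a proof whose last rule is a right modal rule}. Granting this, I dispatch the trivial cases by inspecting the last rule of $\pi$. Irreducibility of $S$ forbids $L\en$, $L\of$ and $L\bot$, whose conclusions would carry a conjunction, disjunction or $\bot$ in the antecedent. If the last rule is {\it Ax}, a right rule, any rule of $\MR$, or an instance of $L\!\imp$ whose principal formula $\chi\imp\psi$ has $\chi$ compound, then its last inference has no left principal formula of the shape $p\imp\psi$ and, not being an $L\!\imp$ on a boxed implication, is vacuously strict; so $\pi$ is already sensible and strict. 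There remain two cases: (a) the last rule is $L\!\imp$ on $p\imp\psi$ for an atom $p$, and (b) it is $L\!\imp$ on $\mdl\phi\imp\psi$.

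Case (b) follows at once from the auxiliary fact. Its left premise $\Ga',\mdl\phi\imp\psi\seq\mdl\phi$ is irreducible with modal succedent, so I replace that subproof by one ending in a right modal rule, which by Remark~\ref{remirrseq} is, under irreducibility, the only way the strictness clause can be met; the proof, still rooted at $L\!\imp$ on $\mdl\phi\imp\psi$, is then sensible and strict. Case (a) is where the offending $L\!\imp$ must be moved off the root, and the tool is Implication Inversion. Writing the left premise, whose root is $L\!\imp$ on some $\alpha\imp\beta\in\Ga'$, as $\Ga'',\alpha\imp\beta,p\imp\psi\seq p$, I permute the critical $L\!\imp$ above it: apply Implication Inversion to the right premise $\Ga'',\alpha\imp\beta,\psi\seq\De$ to get $\Ga'',\beta,\psi\seq\De$, and reassemble so that the new root is $L\!\imp$ on $\alpha\imp\beta$ with the critical $L\!\imp$ one level higher. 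Because the succedent $p$ is atomic it can never be produced by a right modal rule, so the root of the left-premise subproof is an $L\!\imp$ or a left modal rule, and each permutation strictly shortens that subproof; a sub-induction on its height therefore terminates. It bottoms out at a new root that is a left modal rule or an $L\!\imp$ on a compound formula (sensible and strict), an $L\!\imp$ on a boxed implication (made strict by the auxiliary fact applied to its left premise), or an $L\!\imp$ on an atomic implication (a smaller instance of case (a)).

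The real content, and the step I expect to be the main obstacle, lies in the modal permutations: both in the auxiliary fact, where the right modal rule introducing $\mdl\phi$ is pushed down to the root past the left rules beneath it, and in case (a), where the critical $L\!\imp$ is pushed past left-acting rules of $\MR$. Past an $L\!\imp$ these are the Implication-Inversion manoeuvre described above; the delicate ones are past rules of $\MR$ that reshape the antecedent --- a reflexivity rule stripping a box, or a context-clearing rule in the style of $\rsch_K$ --- since then the premise of the descended rule sits over a modified context and must be re-derived there. One must verify that this re-derivation is available from the data at hand (Implication Inversion, together with the admissibility of weakening that holds for the usual modal rules, suffices), and one must fix a well-founded measure for the permutations. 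That measure is not plain proof height, as Implication Inversion need not preserve height; the natural choice is the height of the subproof lying above the modal introduction being permuted, which each downward step strictly decreases.
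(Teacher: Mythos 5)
Your case (a) permutation is essentially the paper's central manoeuvre, but your case (b) --- and with it the architecture of the whole proposal --- rests on an auxiliary fact that is false. It is not true that every provable irreducible sequent with succedent $\mdl\phi$ has a proof ending in a right modal rule. Let $\MR=\{\RS{K}\}$ (so $\LJX=\LJK$) and consider $L=\bigl((r\imp r)\imp\bx p,\ \bx p\imp q\ \seq\ \bx p\bigr)$. This sequent is irreducible and provable: apply $L\!\imp$ to $(r\imp r)\imp\bx p$, where the right premise $\bx p,\bx p\imp q\seq\bx p$ follows by $\RS{K}$ from $p\seq p$. But the antecedent of $L$ contains no boxed formula, so any application of $\RS{K}$ concluding $L$ would need the unprovable premise $(\ \seq p)$; hence no proof of $L$ ends in a right modal rule (and $L$ is no axiom). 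Consequently, for $S'=\bigl((r\imp r)\imp\bx p,\ \bx p\imp q\seq q\bigr)$, whose given proof may end with $L\!\imp$ on $\bx p\imp q$ with left premise exactly $L$, your case (b) cannot be carried out, and indeed \emph{no} sensible strict proof of $S'$ keeps $\bx p\imp q$ principal at the root. The underlying misstep is a misreading of strictness and of Remark~\ref{remirrseq}: the strictness clause is conditional on the last inference being $L\!\imp$ with boxed principal antecedent, and Remark~\ref{remirrseq} only says what a strict proof of that particular shape must look like, not that one of that shape exists. When the left premise is not obtainable by a right modal rule, the remedy is to change which formula is principal in the last inference, not to re-derive the same left premise differently.

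That is exactly what the paper does, uniformly for $\phi$ an atom or a boxed formula, so your atomic/boxed case split is unnecessary. Taking a counterexample $S$ whose proof has shortest \emph{leftmost branch}, the left premise of the final $L\!\imp$ is irreducible with a shorter proof, hence by minimality has a sensible strict proof $\cald_1$; since its succedent is an atom or boxed, and a right-modal-rule ending for $\cald_1$ would make the whole proof of $S$ sensible and strict, $\cald_1$ must end with $L\!\imp$ on some $\phi'\imp\psi'$ with $\phi'$ non-atomic. A \emph{single} application of your Implication-Inversion permutation then makes $\phi'\imp\psi'$ principal at the root: sensibility is immediate, and when $\phi'$ is boxed, strictness of the new proof is inherited from strictness of $\cald_1$ (its left premise is an axiom or the conclusion of a right modal rule). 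This one-step argument also dissolves your termination worries --- there is no iterated pushing and no sub-induction, so the fact that Implication Inversion does not preserve height is harmless --- and it never permutes anything past rules of $\MR$. (Your concern about left modal rules such as $\rsch_{T}$ or $\rsch_{D}$ appearing at the root of the left subproof is a fair one, which the paper passes over by simply asserting that $\cald_1$ ends with $L\!\imp$; but whatever one does there, it cannot be repaired by the false auxiliary fact.)
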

\begin{proof}
This is proved in the same way as the corresponding lemma (Lemma 1) in \citep{dyckhoff92}. Arguing by contradiction, assume that among all provable irreducible sequents that have no sensible strict proofs, $S$ is such a sequent with the shortest proof, $\cald$, where the {\em length} of a proof is the length of its leftmost branch. Thus the last inference in the proof is an application 
\[
 \infer{\Ga,\phi\imp \psi \seq \De}{
 \deduce[\cald_1]{\Ga, \phi\imp \psi \seq \phi}{} & 
 \deduce[\cald_2]{\Ga, \psi \seq \De}{} }
\]
of L$\imp$, where $\phi$ is an atom or a boxed formula.  
Since $S^a$ is irreducible, $\bot \not\in S^a$ and if $\phi$ is an atom, $\phi\not\in S^a$. Therefore the left premise cannot be an axiom and hence is the conclusion of a rule, say $\rsch$. Since the succedent of the conclusion of $\rsch$ consists of an atom or a boxed formula, $\rsch$ is a left rule or a right modal rule. The latter case cannot occur, since the proof then would be strict and sensible. Thus $\rsch$ is a left rule. 

We proceed as in \citep{dyckhoff92}. Sequent $(\Ga, \phi\imp \psi \seq \phi)$ is irreducible and has a shorter proof than $S$. Thus its subproof $\cald_1$ is strict and sensible. Since the sequent is irreducible and $\phi$ is an atom or a boxed formula, the last inference of $\cald_1$ is L$\imp$ with a principal formula $\phi'\imp \psi'$ such that $\phi'$ is not an atom. Let $\cald'$ be the proof of the left premise $(\Ga, \phi\imp \psi \seq \phi')$. Thus the last part of $\cald$ looks as follows, where $\Pi,\phi'\imp \psi'=\Ga$.  
\[
 \infer{\Pi, \phi\imp \psi,\phi'\imp \psi' \seq \De}{
 \infer{\Pi, \phi\imp \psi,\phi'\imp \psi' \seq \phi}{
  \deduce[\cald']{\Pi, \phi\imp \psi,\phi'\imp \psi' \seq \phi'}{} & 
  \deduce[\cald'']{\Pi, \phi\imp \psi,\psi' \seq \phi}{}} & 
 \deduce[\cald_2]{\Pi, \psi,\phi'\imp \psi' \seq \De}{} }
\]
Consider the following proof of $S$.  
\[
 \infer{\Pi, \phi\imp \psi,\phi'\imp \psi' \seq \De}{
  \deduce[\cald']{\Pi, \phi\imp \psi,\phi'\imp \psi' \seq \phi'}{} & 
  \infer{\Pi, \phi\imp \psi,\psi'\seq \De}{ 
   \deduce[\cald'']{\Pi, \phi\imp \psi,\psi' \seq \phi}{} & 
   \deduce[\cald''']{\Pi,\psi,\psi' \seq \De}{} 
  } 
 }
\]
The existence of $\cald'''$ follows from Lemma~\ref{leminvertible} (Implication Inversion) and the existence of $\cald_2$. The obtained proof is strict and sensible: In case $\phi'$ is not a boxed formula, this is straightforward. In case $\phi'$ is a boxed formula, it follows from the fact that was observed above, namely that $\cald_1$ is strict and sensible. 
\end{proof}

\subsection{Equivalence Theorem}

\begin{theorem} 
 \label{thmequivalence}
If $\LJX$ is nonflat, closed under the structural rules, and $\DYX$ is terminating and closed under weakening, then \LJX\ and \DYX\ are equivalent (derive exactly the same sequents). 
\end{theorem}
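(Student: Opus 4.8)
The plan is to establish the two inclusions separately. For $\DYX\subseteq\LJX$ I would check that each rule of $\DYX$ is admissible in $\LJX$ and then induct on the height of a $\DYX$-derivation. The rules $\DYX$ inherits from $\LJ$, and the modal rules in $\MR$, are literally rules of $\LJX$, so only Dyckhoff's left-implication rules and the rules $\rsch^\imp$ require work, and each is derivable from $L\!\imp$ plus the structural rules: $Lp\!\imp$ closes its left premise by an axiom; $L\en\!\imp$ and $L\of\!\imp$ use the cut-provable equivalences of $(\phi\en\psi)\imp\gamma$ with $\phi\imp(\psi\imp\gamma)$ and of $(\phi\of\psi)\imp\gamma$ with the pair $\phi\imp\gamma,\psi\imp\gamma$; $L\!\imp\!\imp$ uses $(\phi\imp\psi)\imp\gamma\proofs\psi\imp\gamma$ together with a cut; and $\rsch^\imp$ is simulated by applying $\rsch$ to obtain $\Ga\seq\mdl\phi$, weakening to $\Ga,\mdl\phi\imp\psi\seq\mdl\phi$, and finishing with $L\!\imp$. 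Since $\LJX$ is closed under the structural rules this direction is routine.

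The substantial direction is $\LJX\subseteq\DYX$, which I would prove by well-founded induction along the order $\sml_w$ that witnesses the termination of $\DYX$. Given $\LJX\proofs S$, I would first peel off every antecedent formula that a $\DYX$-rule can decompose without help from the proof of $S$: if $\bot\in S^a$ then $S$ is an instance of $L\bot$; if $S^a$ contains a conjunction, a disjunction, or a pair $p,\,p\imp\psi$, I apply $L\en$, $L\of$, or $Lp\!\imp$ backwards, whose premises are $\LJX$-derivable by the invertibility part of Lemma~\ref{leminvertible}; and if $S^a$ contains an implication with a conjunctive, disjunctive, or $\bot$ antecedent, I apply $L\en\!\imp$, $L\of\!\imp$, or a weakening backwards, whose premises are now $\LJX$-derivable by a cut. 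In every case the premises are $\sml_w$-smaller than $S$ because $\DYX$ terminates, so the induction hypothesis together with a forward application of the rule in $\DYX$ gives $\DYX\proofs S$.

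This reduces the problem to an $S$ that is irreducible and in which every antecedent implication has an atomic, implicational, or boxed antecedent. For such $S$ I would invoke Lemma~\ref{lemstrict} to obtain a sensible strict proof $\cald$ and inspect its last inference. If it is an axiom, a right rule, or a rule of $\MR$, that rule is already present in $\DYX$, its premises are $\LJX$-provable subderivations that are $\sml_w$-smaller by termination, and the induction hypothesis finishes the case. It cannot be $L\bot,L\en,L\of$ (as $S$ is irreducible), and by sensibility it cannot be $L\!\imp$ on an atomic antecedent; so the only surviving case is $L\!\imp$ with principal formula $\phi\imp\psi$ where $\phi$ is an implication $\gamma_1\imp\gamma_2$ or a box $\mdl\gamma$. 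For $\phi=\gamma_1\imp\gamma_2$ I apply $L\!\imp\!\imp$ backwards: its right premise is the right subderivation of $\cald$, and its left premise $\Ga,\gamma_2\imp\psi\seq\gamma_1\imp\gamma_2$ is obtained from the left subderivation of $\cald$ (which proves $\Ga,(\gamma_1\imp\gamma_2)\imp\psi\seq\gamma_1\imp\gamma_2$) by inverting $R\!\imp$ and cutting against $\gamma_1,\gamma_2\imp\psi\proofs(\gamma_1\imp\gamma_2)\imp\psi$; termination and the induction hypothesis then apply.

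The case $\phi=\mdl\gamma$ is the heart of the matter and the step I expect to be the main obstacle. Here strictness is exactly what pays off: the left premise of the final $L\!\imp$ is $S^a\seq\mdl\gamma$, and by Remark~\ref{remirrseq} --- using that $S$ is irreducible and $\MR$ is nonflat --- it cannot be an axiom, so it is the conclusion of a right modal rule $\rsch$ with some premises $S_1,\dots,S_n$. The last two inferences of $\cald$ thus realise exactly the pattern that the associated rule $\rsch^\imp$ compresses, and I would close the case by applying $\rsch^\imp$ to $S_1,\dots,S_n$ and the right subderivation of $\cald$. The delicate point is the antecedent bookkeeping: in $\cald$ the rule $\rsch$ is applied with the principal implication $\mdl\gamma\imp\psi$ already sitting in the antecedent of its conclusion, whereas $\rsch^\imp$ must reintroduce $\mdl\gamma\imp\psi$ over $\Ga$, and one has to see that this does not leave a spurious second copy (no contraction is available in $\DYX$). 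This works because a right modal rule treats the non-modal formula $\mdl\gamma\imp\psi$ as inert context, so the very same premises $S_1,\dots,S_n$ witness both $S^a\seq\mdl\gamma$ and $\Ga\seq\mdl\gamma$; once this is checked, termination makes the premises of $\rsch^\imp$ $\sml_w$-smaller than $S$ and the induction hypothesis closes the argument.
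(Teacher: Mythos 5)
Your proposal is correct and takes essentially the same route as the paper's own proof: the easy inclusion by simulating the extra $\DY$ rules and $\rsch^\imp$ in $\LJX$ via cut and the structural rules, and the hard inclusion by induction on the termination order, reducing to irreducible sequents, invoking Lemma~\ref{lemstrict} for a sensible strict proof, and closing the boxed case via Remark~\ref{remirrseq} and $\rsch^\imp$. The only differences are organizational rather than substantive: you peel off implications with $\bot$-, $\en$- or $\of$-antecedents in a preprocessing phase where the paper treats them as subcases of the final $L\!\imp$, you spell out the cut (with $R\!\imp$-inversion and contraction) that the paper compresses into ``by the closure under Cut'' in the $L\!\imp\!\imp$ case, and you explicitly flag and discharge the context-matching point in the modal case --- that the same premises $S_1,\dots,S_n$ must witness the $\rsch$-instance with conclusion $\Ga\seq\mdl\gamma$, so that $\rsch^\imp$ does not reintroduce a second copy of $\mdl\gamma\imp\psi$ --- which the paper's proof passes over silently.
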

\begin{proof}
The proof is an adaptation of the proof of Theorem 1 in \citep{dyckhoff92}. 
Under the assumptions in the theorem we have to show that for all sequents $S$: 
$\af_{\LJX} S$ if and only if $\af_{\DYX} S$. 

The proof of the direction from right to left is straightforward because $\LJX$ is closed under the structural rules, but let's fill in some of the details. We use induction to the height of the proof of a sequent in $\DYX$, where the {\em height} of a derivation is the length of its longest branch, where branches consisting of one node are considered to be of height 1. 

Suppose $\af_{\DYX}S$. If $S$ is an instance of an axiom, then clearly $\af_{\LJX}S$ as well. Suppose $S$ is not an instance of an axiom and consider the last inference of the proof of $S$. We distinguish according to the rule $\rsch$ of which the last inference is an instance. 

If the rule is $Lp\!\imp$, then $S$ is of the form $\Ga,p,p\imp \phi \seq \De$. The premise is 
$\Ga,p,\phi \seq \De$, which, by the induction hypothesis, is derivable in $\LJX$. It is not hard to show that $\Ga,p,p\imp \phi\seq \phi$ is also derivable in $\LJX$. Closure under Cut and Contraction shows that so is $S$.   

If the rule is $L\!\imp\!\imp$, then $S$ is of the form $\Ga, (\phi \imp \psi)\imp \gam \seq \De$
and the premisses are $\Ga,\gam\seq \De$ and $\Ga,\psi\imp\gam\seq \phi\imp\psi$. The premises are derivable in $\LJX$ by the induction hypothesis. It is not difficult to show that then $\Ga,(\phi \imp \psi) \imp \gam, \phi \seq \psi$ is derivable in $\LJX$ as well. Hence so is $\Ga,(\phi \imp \psi) \imp \gam\seq \phi \imp \psi$. An application of $L\!\imp$ proves that $S$ is derivable in $\LJX$. 

The remaining cases are left to the reader.
 
The other direction is proved by induction on the order $\sml$ with respect to which \DYX\ is terminating. So suppose $\LJX\af S$. 
Sequents lowest in the order do not contain connectives or modal operators by definition of the weight function underlying $\sml$. Since the calculi are nonflat, such sequents have to be instances of axioms, and since $\LJX$ and $\DYX$ have the same axioms, $S$ is provable in $\DYX$. 

We turn to the case that $S$ is not the lowest in the order. 
If $S^a$ contains a conjunction,  $S = (\Ga,\phi_1\en\phi_2 \seq \De)$, then $S'=(\Ga,\phi_1,\phi_2\seq \De)$ is provable in $\LJX$ by Lemma~\ref{leminvertible}. As \DY\ contains $L\en$ and \DYX\ is terminating, $S'\sml S$ follows. Hence $S'$ is provable in $\DYX$ by the induction hypothesis. Thus so is $(\Ga,\phi_1\en\phi_2 \seq \De)$. A disjunction in $S^a$ as well as the case that both $p$ and $p\imp \phi$ belong to $S^a$, can be treated in the same way. 

Thus only the case that $S$ is irreducible remains, and by Lemma~\ref{lemstrict} we may assume its proof in \LJX\ to be sensible and strict. Thus its last inference is an application of a rule, $\rsch$, that is either a nonmodal right rule, a modal rule or $L\!\imp$. In the first two cases, $\rsch$ belongs to both calculi and the fact that $\DYX$ is terminating implies that the premise(s) of $\rsch$ is lower in the order $\sml$ than $S$. Thus the induction hypothesis applies. We turn to the third case. 
Suppose that the principal formula of the last inference is $(\gam \imp \psi)$ and $S = (\Ga,\gam\imp\psi \seq \De)$. Since the proof is sensible, $\gam$ is not atomic. 
We distinguish according to the main connective of $\gam$.  

If $\gam = \bot$, then $(\Ga\seq \De)$ is derivable in $\LJX$ because of the closure under Cut. Since $(\Ga\seq \De) \sml S$, $(\Ga\seq \De)$ is derivable in $\DYX$ by the induction hypothesis. As $\DYX$ is closed under weakening, $S$ is derivable in $\DYX$ too. 

If $\gam = \phi_1 \en \phi_2$, then $S'=(\Ga,\phi_1 \imp (\phi_2\imp \psi) \seq \De)$ is derivable in $\LJX$ because of the closure under Cut. The fact that $\DYX$ is terminating and contains $L\en\!\!\imp$ implies $S'\sml S$. Hence $S'$ is derivable in $\DYX$ by the induction hypothesis. Thus so is 
$(\Ga,\phi_1 \en \phi_2 \imp \psi\seq \De)$. The case that $\gam = \phi_1 \of \phi_2$ is analogous.

If $\gam = \phi_1 \imp \phi_2$, then because $\gam\imp\psi$ is the principal formula, 
both premises $S_1=(\Ga, \psi \seq \De)$ and $(\Ga, \gam \imp \psi \seq \gam)$ are derivable in $\LJX$. Thus so is sequent $S_2=(\Ga,\phi_2\imp\psi \seq \phi_1\imp\phi_2)$ by the closure under Cut. Since \DYX\ is terminating and $S_1$ and $S_2$ are the premises of $L\!\imp\!\imp$, they both are lower in the order $\sml$ than $S$. Therefore they are 
derivable in $\DYX$ by the induction hypothesis. And thus so is $S$. 

If $\gam = \mdl\phi$, then Remark~\ref{remirrseq} and the fact that the proof is strict and $S$ is irreducible implies that the left premise is the conclusion of an application of a right modal rule $\rsch$ with premises $S_1, \dots, S_n$. Thus the derivation looks as follows:
\[
 \infer{\Ga,\mdl\phi\imp \psi \seq \De}{
  \infer[\rsch]{\Ga,\mdl\phi\imp \psi \seq \mdl\phi}{
   \deduce[\cald_1]{S_1}{} & \dots & \deduce[\cald_n]{S_n}{}}  & 
  \deduce[\cald_0]{\Ga,\psi \seq \De}{}}
\]
Therefore \DYX\ contains the rule 
\[
  \infer[\rsch^\imp]{\Ga,\mdl\phi\imp \psi \seq \De}{
  \deduce[]{S_1}{ } & \dots & \deduce[]{S_n}{ } & 
  \deduce[]{\Ga,\psi\seq \De}{} }
\]
Since \DYX\ is terminating, $(\Ga,\psi \seq \De) \sml S$ and $S_i \sml S$ follow. 
By the induction hypothesis, the $S_i$ as well as $(\Ga,\psi \seq \De)$ are derivable in $\DYX$, say with derivations $\cald_i'$ and $\cald_0'$, respectively. Since $\rsch^\imp$ belongs to the calculus, the following is a proof of $(\Ga,\mdl\phi\imp \psi \seq \De)$ in $\DYX$:
\[
  \infer[\rsch^\imp]{\Ga,\mdl\phi\imp \psi \seq \De}{
  \deduce[\cald_1']{S_1}{ } & \dots & \deduce[\cald_n']{S_n}{ } & 
  \deduce[\cald_0']{\Ga,\psi\seq \De}{} }
\]
\end{proof}

\begin{corollary} 
 \label{corconcutdy} 
If $\LJX$ is nonflat, closed under the structural rules, and $\DYX$ is terminating and closed under weakening, then the structural rules are admissible in $\DYX$.   
\end{corollary}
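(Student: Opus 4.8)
The plan is to derive everything from the equivalence established in Theorem~\ref{thmequivalence}. The hypotheses of the corollary are exactly those of that theorem, so under them $\LJX$ and $\DYX$ derive precisely the same sequents. The key observation I would exploit is that each of the structural rules is, by definition, a statement purely about derivability of sequents: a rule with premises $S_1,\dots,S_k$ and conclusion $S$ is admissible in a calculus $\G$ precisely when $\af_\G S_1,\dots,\af_\G S_k$ jointly imply $\af_\G S$. Since $\LJX$ and $\DYX$ have exactly the same derivable sequents, admissibility of any such rule transfers verbatim from the one calculus to the other.

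Concretely, I would treat the three structural rules in turn. Weakening is admissible in $\DYX$ by assumption, so nothing is to be shown there. For contraction, suppose $\af_{\DYX}\Ga,\phi,\phi \seq \De$. By the equivalence, $\af_{\LJX}\Ga,\phi,\phi \seq \De$; since $\LJX$ is closed under contraction, $\af_{\LJX}\Ga,\phi \seq \De$; and applying the equivalence again yields $\af_{\DYX}\Ga,\phi \seq \De$. The argument for cut has the same shape: from $\af_{\DYX}\Ga_1 \seq \phi$ and $\af_{\DYX}\Ga_2,\phi \seq \De$ I would pass to $\LJX$, invoke its cut-elimination to obtain $\af_{\LJX}\Ga_1,\Ga_2 \seq \De$, and then pass back to $\DYX$.

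I expect no genuine obstacle here, since the corollary is essentially a reformulation of the equivalence theorem together with the hypothesis that $\LJX$ is closed under the structural rules. The only points worth verifying are that the hypotheses of the corollary match those of Theorem~\ref{thmequivalence} so that the equivalence is indeed available, which they do, and that cut-elimination is read as the admissibility of the Cut rule in the sense used above, which is exactly how it was defined in the preliminaries.
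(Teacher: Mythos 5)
Your proposal is correct and is exactly the argument the paper intends: the corollary is stated without proof as an immediate consequence of Theorem~\ref{thmequivalence}, namely transferring admissibility of weakening, contraction, and cut from $\LJX$ (which is closed under the structural rules by hypothesis) to $\DYX$ via the identity of their derivable sequents. Your explicit spelling-out of the three cases matches this implicit reasoning precisely.
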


Theorem~\ref{thmequivalence} and Corollary~\ref{corconcutdy} combined with the observation in Remark~\ref{remweight} give the following.
 
\begin{corollary} 
 \label{corduckhofforder} 
If $\LJX$ is nonflat, closed under the structural rules, terminating in the Dyckhoff order, and $\DYX$ is closed under weakening, then \LJX\ and \DYX\ are equivalent, \DYX\ is terminating, and the structural rules are admissible in $\DYX$.  
\end{corollary}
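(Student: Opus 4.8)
The plan is to obtain this corollary as a routine assembly of the three results immediately preceding it, so the argument is essentially bookkeeping rather than new mathematics. First I would unpack the hypothesis that \LJX\ is terminating in the Dyckhoff order. Since \LJX\ is $\LJ$ together with the modal rules $\MR$, and the rule $L\!\imp$ of $\LJ$ is by design not terminating in any order $\sml_w$ (its principal formula reappears in the left premise, which is precisely the obstruction the whole paper addresses), this hypothesis must be read as the assertion that the rules in $\MR$ terminate in $\sml_D$. That is exactly the premise of Remark~\ref{remweight}, so from it I would conclude that \DYX\ terminates in $\sml_D$, and hence that \DYX\ is terminating. This is the only place where the Dyckhoff-order hypothesis is used; it converts the concrete data into the abstract termination hypothesis required by the more general earlier results.

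Once \DYX\ is known to be terminating, I would check that the hypotheses of Theorem~\ref{thmequivalence} and of Corollary~\ref{corconcutdy} are all in place: \LJX\ is nonflat and closed under the structural rules (given), while \DYX\ is terminating (just established) and closed under weakening (given). Theorem~\ref{thmequivalence} then gives that \LJX\ and \DYX\ are equivalent, and Corollary~\ref{corconcutdy} gives that the structural rules are admissible in \DYX. Together with the termination of \DYX\ already obtained, this is precisely the threefold conclusion sought.

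Since all the real content sits in Theorem~\ref{thmequivalence}, whose proof was carried out above, and in Remark~\ref{remweight}, I do not expect a genuine obstacle. The only point that demands care is the interpretive one flagged at the outset: a blanket reading of ``\LJX\ is terminating'' is impossible, so the hypothesis must be understood as a condition on the added modal rules $\MR$ alone. Getting this matching right is what licenses the appeal to Remark~\ref{remweight} and thereby the whole assembly.
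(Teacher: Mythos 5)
Your proposal is correct and takes exactly the paper's route: the paper derives this corollary in one line by combining Theorem~\ref{thmequivalence}, Corollary~\ref{corconcutdy} and Remark~\ref{remweight}, which is precisely your assembly. Your interpretive point is also sound --- since $L\!\imp$ cannot terminate in any order $\sml_w$, the hypothesis ``\LJX\ terminating in the Dyckhoff order'' must indeed be read as termination of the rules in $\MR$, which is the form of hypothesis Remark~\ref{remweight} actually uses.
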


\section{Conclusion}
 \label{secconclusion}
It has been shown that for any calculus \LJX\ that consists of \LJ\ plus a set of nonflat modal rules $\MR$, there exists a calculus \DYX\ that is equivalent to \LJX, provided \LJX\ is nonflat, closed under the structural rules and \DYX\ is terminating and closed under weakening. In the setting of intuitionistic modal logics, one usually requires ones sequent calculi to be closed under the structural rules, so that requirement on \LJX\ is relatively innocent. Likewise for the closure under weakening of \DYX. The requirement that \DYX\ be terminating is not innocent. Although many common modal rules are terminating in the Dyckhoff order, some well-known rules are not. Examples are the standard rules for transitivity, G\"odel-L\"ob Logic and Strong L\"ob Logic (writing $\bx$ for $\mdl$): 
\[
 \infer[\rsch_{K4}]{\Pi,\bx\Ga \seq \bx\phi}{\Ga,\bx\Ga \seq \phi} \ \ \ \ 
 \infer[\rsch_{GL}]{\Pi,\bx\Ga \seq \bx\phi}{\Ga,\bx\Ga,\bx\phi \seq \phi} \ \ \ \ 
 \infer[\rsch_{SL}]{\bx\Sig,\Pi,\bx\Ga \seq\bx\phi}{\Pi,\bx\Ga,\Ga,\bx \phi \seq \phi}
\]
Therefore, to apply the method in this paper to such logics, an order different from the Dyckhoff order has to be found with respect to which \DYX\ is terminating. G\"odel-L\"ob Logic and Strong L\"ob Logic are examples for which that can be done, as shown in \citep{giessen&iemhoff2019,giessen&iemhoff2020} using an ingenious order introduced by  \cite{bilkova06}.

\end{document}